\numberwithin{equation}{section}
\theoremstyle{plain}
\newtheorem{thm}{\protect\theoremname}[section]
\theoremstyle{plain}
\newtheorem{lem}[thm]{\protect\lemmaname}
\theoremstyle{plain}
\newtheorem{cor}[thm]{\protect\corollaryname}
\theoremstyle{plain}
\newtheorem{prop}[thm]{\protect\propositionname}
\theoremstyle{remark}
\newtheorem{rem}[thm]{\protect\remarkname}
\newcommand{\hide}[1]{}
\DeclareMathOperator{\diam}{diam}
\DeclareMathOperator{\supp}{supp}
\date{}
\providecommand{\corollaryname}{Corollary}
\providecommand{\lemmaname}{Lemma}
\providecommand{\propositionname}{Proposition}
\providecommand{\remarkname}{Remark}
\providecommand{\theoremname}{Theorem}
\begin{document}
\title{A short proof of Host's equidistribution theorem}
\author{Michael Hochman\thanks{Supported by ISF grants 1702/17 and 3056/21}}
\maketitle
\begin{abstract}
This note contains a new proof of Host's equidistribution theorem
for multiplicatively independent endomorphisms of $\mathbb{R}/\mathbb{Z}$.
The method is a simplified version of our recent work on equidistribution
under toral automorphisms \cite{Hochman2019} and is related to the
argument in \cite{HochmanShmerkin2015-equidistribution-from-fractal-measures},
but avoids the use of the scenery flow and of Marstrand's projection
theorem, using instead a direct Fourier argument to establish smoothness
of the limit measure.
\end{abstract}
\tableofcontents{}

\section{Introduction}

Furstenberg has famously conjectured that if $a,b\in\mathbb{N}$ are
multiplicatively independent integers, then the only Borel probability
measures on $\mathbb{R}/\mathbb{Z}$ that are invariant and ergodic
under $\times a$ and $\times b$ are either atomic or Lebesgue. The
conjecture has been partially verified by Rudolph and Johnson under
an assumption of positive entropy; many generalizations exist.

Closely related to Rudolph's theorem is Host's equidistribution theorem:\footnote{Host \cite{Host95} proved this assuming $\gcd(a,b)=1$. A more general
statement, when $a\not|b^{k}$ for all $k$, was proved by Lindenstrauss
\cite{Lindenstrauss2001b}, and the general result by Hochman-Shmerkin
\cite{HochmanShmerkin2015-equidistribution-from-fractal-measures}.}
\begin{thm}
[\cite{Host95,Lindenstrauss2001b,HochmanShmerkin2015-equidistribution-from-fractal-measures}]
If $\mu$ is a probability measure on $\mathbb{R}/\mathbb{Z}$ that
is invariant, ergodic and has positive entropy under an endomorphism
$\times a$, then $\mu$-a.e. point equidistributes for Lebesgue measure
under $\times b$, provided $a$ and $b$ are multiplicatively independent. 
\end{thm}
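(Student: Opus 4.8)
The plan is a proof by contradiction in which $\mu$-a.e.\ equidistribution is encoded as the triviality of a self-joining, and the positive $\times a$-entropy of $\mu$ is fed into a Fourier computation that forces that triviality; this is the ``direct Fourier argument'' promised in the abstract, replacing the appeal to Marstrand's projection theorem in \cite{HochmanShmerkin2015-equidistribution-from-fractal-measures}. Write $T_a,T_b$ for $\times a,\times b$ on $\mathbb R/\mathbb Z$ and $e(t)=e^{2\pi i t}$. For $x\in\mathbb R/\mathbb Z$ set $\mu_{x,N}=\frac1N\sum_{n<N}\delta_{b^nx}$; then $x$ equidistributes under $\times b$ iff $\mu_{x,N}\to\mathrm{Leb}$ weak-$*$, so $\mu$-a.e.\ equidistribution is equivalent to the assertion that every weak-$*$ limit $\lambda$ of the measures $\lambda_N=\frac1N\sum_{n<N}(\mathrm{id}\times T_b^n)_*\Delta_*\mu$ on $(\mathbb R/\mathbb Z)^2$ (with $\Delta(x)=(x,x)$) equals $\mu\times\mathrm{Leb}$. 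Any such $\lambda$ has first marginal $\mu$, is invariant under $\mathrm{id}\times T_b$, and --- since $T_aT_b^n=T_b^nT_a$ and $(T_a)_*\mu=\mu$ give $(T_a\times T_a)_*\lambda_N=\lambda_N$ for all $N$ --- is also invariant under $T_a\times T_a$. Disintegrating $\lambda=\int\delta_x\otimes\nu_x\,d\mu(x)$, these invariances say precisely that $\mu$-a.e.\ $\nu_x$ is $T_b$-invariant and that the family $(\nu_x)$ is compatible with the $\times a$ dynamics: $\nu_{T_ax}$ is the $\mu$-conditional average of $(T_a)_*\nu_{x'}$ over the $a$ preimages $x'$ of $T_ax$. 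So I must deduce, using $h_\mu(T_a)>0$, that $\nu_x=\mathrm{Leb}$ for $\mu$-a.e.\ $x$.

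For the Fourier step, fix $k\in\mathbb Z\setminus\{0\}$ and put $F_k(x)=\widehat{\nu_x}(k)$, so $\|F_k\|_\infty\le1$; the goal is $F_k=0$ $\mu$-a.e. The $T_b$-invariance of $\nu_x$ gives $F_k=F_{bk}$, and the $\times a$-compatibility, iterated, gives $F_k\circ T_a^j=\mathbb E_\mu[\,F_{a^jk}\mid T_a^{-j}\mathcal B\,]$ for every $j\ge1$, a reverse-martingale-type identity over the decreasing filtration $T_a^{-j}\mathcal B$. The plan is to let $j\to\infty$: this is exactly where positive entropy enters --- via a Shannon--McMillan--Breiman estimate, or equivalently via the structure of the deep $a$-adic conditional measures of $\mu$, one shows that the right-hand side loses its $x$-dependence at a definite rate (morally, the rescaled conditional measures of $\mu$ on long base-$a$ cylinders ``smooth out''), while multiplicative independence of $a$ and $b$ ensures the relation $F_k=F_{bk}$ can never trade the large frequency $a^jk$ back down, so no cancellation is spuriously lost. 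One concludes that $F_k$ is $\mu$-a.e.\ constant, equal to $\int\widehat{\nu_x}(k)\,d\mu(x)=\widehat\rho(k)$ where $\rho$ is the second marginal of $\lambda$; as $\rho$ is invariant under both $\times a$ and $\times b$, a further short Fourier argument (using the entropy hypothesis, not Rudolph's theorem) yields $\widehat\rho(k)=0$, hence $\nu_x=\mathrm{Leb}$ a.e., contradicting the failure of equidistribution.

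I expect the main obstacle to be precisely the effective passage to the limit $j\to\infty$: one needs a \emph{quantitative, scale-uniform} smoothing bound on the conditional measures of the positive-entropy measure $\mu$ along the $a$-adic filtration, strong enough to overcome the unbounded growth of the frequencies $a^jk$, together with control of the interaction between the multiplicative structure carried by $\times b$ and the additive base-$a$ structure carried by $\mu$; extracting such a bound from mere positive entropy, uniformly over scales, is the heart of the matter. The remaining ingredients --- weak-$*$ compactness, disintegration, and martingale convergence --- are routine.
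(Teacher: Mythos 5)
Your reduction breaks at the first step. You claim that $\mu$-a.e.\ equidistribution under $T_b$ is \emph{equivalent} to the assertion that every weak-$*$ limit of $\lambda_N=\frac1N\sum_{n<N}(\mathrm{id}\times T_b^n)_*\Delta_*\mu$ equals $\mu\times\mathrm{Leb}$. Only one direction holds: a.e.\ equidistribution implies convergence of $\lambda_N$ by dominated convergence, but the converse fails. Convergence of $\lambda_N$ is a statement about Ces\`aro averages integrated over $x$; it says nothing about $x$-dependent subsequences $N_j(x)$, which is exactly what a.e.\ pointwise equidistribution requires. (This is the same gap as between convergence in probability and a.s.\ convergence: a ``typewriter''-style oscillation in $x$ can make $\frac1N\sum_{n<N}e_m(T_b^n x)$ fail to converge for a.e.\ $x$ while $\lambda_N$ still tends to the product.) The paper avoids this entirely by establishing a genuinely \emph{pointwise} surrogate, Theorem~\ref{thm:equidistribution-1}: for $\mu$-a.e.\ $x$, $\frac1N\sum_{n\le N}(\delta_{T^n x}-T^n\mu_{\mathcal{A}_n(x)})\to0$, proved via the martingale ergodic theorem (Theorem~\ref{thm:martingale-differences}). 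Your joining $\lambda$ cannot recover this without some pointwise input, so the rest of the argument never gets off the ground.

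Even granting the reduction, two further steps do not work as described. First, the relations $F_k=F_{bk}$ and $F_k\circ T_a^j=\mathbb E_\mu[F_{a^jk}\mid T_a^{-j}\mathcal B]$ by themselves do not make the Fourier coefficients decay: you need an averaging over a \emph{continuum of scales}, which is where multiplicative independence actually enters. In the paper, $z_n=n\log b/\log a\bmod 1$ equidistributes because $\log b/\log a\notin\mathbb Q$, and Lemma~\ref{lem:FT-smoothing-bound} shows that $\int_0^1|\widehat{S_{a^z}\nu}(m)|^2\,dz$ is small whenever $\nu$ is spread out; this random-scale smoothing is the ``direct Fourier argument'' the abstract advertises, and your discrete frequency relations supply nothing analogous. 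Merely letting $j\to\infty$ in your reverse-martingale identity gives no decay absent such an averaging. Second, even if $F_k$ were shown to be a.e.\ constant, your endgame asks to prove $\widehat\rho(k)=0$ for the jointly $T_a$- and $T_b$-invariant marginal $\rho$ --- but $\rho$ has no reason to inherit positive entropy from $\mu$, and classifying $\times a,\times b$-invariant measures (with or without entropy) is exactly the Rudolph/Furstenberg problem this approach is supposed to bypass, so the step is circular as stated. In the paper no classification of limit measures is ever needed: the smoothing estimate is applied directly to the conditional measures $\mu_\omega$, which are non-atomic precisely because $h_\mu(T_a)>0$, and that alone closes the argument.
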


In this note we give a new proof of Host's theorem. The proof is a
simplified version of the one in \cite{Hochman2019}, but is also
related to the proof from \cite{HochmanShmerkin2015-equidistribution-from-fractal-measures}.
It differs from the latter primarily in its ``endgame'', where the
use of Marstrand's projection theorem is replaced by a more direct
Fourier-theoretic argument, and it avoids the use of the scenery flow
machinery found there. We also note that unlike Host's original proof,
the one here covers the general result for multiplicatively independent
endomorphisms.

\section{\label{sec:General-results-on-equidistribution}General results on
equidistribution}

In this section we establish some ``soft'' equidistribution results. 

If $(X,\mathcal{B})$ is a standard Borel space and $\mathcal{A}\subseteq\mathcal{B}$
is a measurable partition (or countably generated $\sigma$-algebra),
\,we write $\mathcal{A}(x)$ for the unique element of $\mathcal{A}$
containing a point $x\in X$. If $\mu$ is a probability measure and
$\mu(A)>0$ then $\mu_{A}=\frac{1}{\mu(A)}\mu|_{A}$ denotes the normalized
restriction of $\mu$ to $A$. If $\mathcal{C\subseteq\mathcal{B}}$
is a countably generated $\sigma$-algebra, then $\mu_{x}^{\mathcal{C}}$
denotes the conditional measure of $\mu$ on $\mathcal{C}(x)$, which
is defined $\mu$-a.e.

\subsection{\label{subsec:The-ergodic-theorem-for-M-differences}The ergodic
theorem for martingale differences}

We record a variant of the ergodic theorem for martingale differences,
and a simple consequence.
\begin{thm}
\label{thm:martingale-differences}Let $\mathcal{B}_{1}\subseteq\mathcal{B}_{2}\subseteq\mathcal{B}_{3}\subseteq\ldots\subseteq\mathcal{B}$
be an increasing sequence of $\sigma$-algebras in a probability space
$(X,\mathcal{B},\mu)$. Let $f_{n}\in L_{\infty}(\mu,\mathcal{B}_{n+1})$
be a uniformly bounded sequence. Then with probability one,
\begin{equation}
\lim_{N\rightarrow\infty}\frac{1}{N}\sum_{n=1}^{N}(f_{n}-\mathbb{E}(f_{n}\,|\,\mathcal{B}_{n}))=0\label{eq:6}
\end{equation}
More generally, suppose $k\in\mathbb{N}$ and $f_{n}\in L_{\infty}(\mu,\mathcal{B}_{n+k})$.
Then (\ref{eq:6}) still holds.
\end{thm}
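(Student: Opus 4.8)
The plan is to reduce the general $k$ case to the $k=1$ case, and to handle $k=1$ by the standard martingale-difference argument using Menshov–Rademacher-type orthogonality together with the Borel–Cantelli lemma along a sparse subsequence.

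For the $k=1$ case, set $d_n = f_n - \mathbb{E}(f_n \mid \mathcal{B}_n)$, so that $d_n \in L_\infty(\mu,\mathcal{B}_{n+1})$, $\|d_n\|_\infty \le 2C$ for a uniform bound $C$, and crucially $\mathbb{E}(d_n \mid \mathcal{B}_n) = 0$. The key observation is that the $d_n$ are \emph{pairwise orthogonal} in $L_2(\mu)$: if $m < n$ then $d_m$ is $\mathcal{B}_{m+1}$-measurable, hence $\mathcal{B}_n$-measurable, so $\mathbb{E}(d_m d_n) = \mathbb{E}(d_m\,\mathbb{E}(d_n \mid \mathcal{B}_n)) = 0$. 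Therefore $\mathbb{E}\big(|\sum_{n=1}^{N} d_n|^2\big) = \sum_{n=1}^N \mathbb{E}(d_n^2) \le 4C^2 N$, which gives $\mathbb{E}\big(|\frac{1}{N}\sum_{n=1}^N d_n|^2\big) \le 4C^2/N$. Along the subsequence $N = j^2$ this is summable in $j$, so by Borel–Cantelli (after Chebyshev) we get $\frac{1}{j^2}\sum_{n=1}^{j^2} d_n \to 0$ a.e. To fill in the gaps between consecutive squares, for $j^2 \le N < (j+1)^2$ write $\frac{1}{N}\sum_{n=1}^N d_n = \frac{1}{N}\sum_{n=1}^{j^2} d_n + \frac{1}{N}\sum_{n=j^2+1}^{N} d_n$; the first term is controlled by the subsequential limit since $j^2/N \to 1$, and the second is bounded in absolute value by $\frac{1}{N}\sum_{n=j^2+1}^{(j+1)^2} |d_n| \le \frac{2C\,((j+1)^2 - j^2)}{j^2} = O(1/j) \to 0$. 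This yields \eqref{eq:6}.

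For the general $k \ge 1$, the trick is to split the sum according to residues modulo $k$. Fix $r \in \{0,1,\dots,k-1\}$ and consider the indices $n = r + mk$. The subfamily $\{\mathcal{B}_{r+mk+1}\}_{m\ge 0}$ is again an increasing chain of $\sigma$-algebras, and $f_{r+mk} \in L_\infty(\mu, \mathcal{B}_{r+mk+k}) = L_\infty(\mu, \mathcal{B}_{(r+k)+mk})$; but relative to the reindexed chain $\mathcal{B}'_m := \mathcal{B}_{r+mk+1}$ we have $f_{r+mk}$ measurable with respect to $\mathcal{B}'_{m} $ shifted by one step, so the $k=1$ case applies to each residue class and gives $\frac{1}{M}\sum_{m=1}^{M}(f_{r+mk} - \mathbb{E}(f_{r+mk}\mid\mathcal{B}_{r+mk})) \to 0$ a.e. Summing the $k$ residue classes and noting that each contributes a $1/k$ fraction of the Cesàro average (up to $O(k/N)$ boundary terms), we recover \eqref{eq:6} for general $k$.

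The only mildly delicate point — and the one I would be most careful about — is the bookkeeping in the reindexing for the $k$ case, specifically checking that $f_{r+mk}$ really is measurable with respect to the $(m+1)$-st algebra of the reindexed chain so that the conditional expectation subtracted in the $k=1$ statement matches $\mathbb{E}(f_{r+mk}\mid \mathcal{B}_{r+mk})$ exactly; since $r+mk+k = r+(m+1)k$ and $\mathcal{B}'_{m+1} = \mathcal{B}_{r+(m+1)k+1} \supseteq \mathcal{B}_{r+(m+1)k} \supseteq \mathcal{B}_{r+mk+k}$, this works, but one must make sure the conditioning $\sigma$-algebra in the reindexed $k=1$ application is $\mathcal{B}'_m = \mathcal{B}_{r+mk+1}$ versus the desired $\mathcal{B}_{r+mk}$ — these differ, so in fact it is cleanest to prove a slightly more flexible $k=1$ statement where $f_n$ is $\mathcal{B}_{n+1}$-measurable but we subtract $\mathbb{E}(f_n\mid\mathcal{B}_{n-j})$ for a fixed lag $j$, or simply to run the orthogonality computation directly in the $k$ case (for $m<n$ with $n-m\ge k$, $d_m$ is $\mathcal{B}_{m+k}$-measurable hence $\mathcal{B}_n$-measurable, and for $|n-m| < k$ one absorbs $O(k)$ diagonal-ish terms into the variance bound, changing the constant from $4C^2N$ to $O(kC^2 N)$). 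That direct approach sidesteps the reindexing entirely and is what I would ultimately write.
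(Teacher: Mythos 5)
Your $k=1$ argument (orthogonality of martingale differences, Chebyshev plus Borel--Cantelli along $N=j^2$, and filling the gaps between squares) is a correct and self-contained proof of the special case; the paper simply cites Feller for it. For general $k$ you and the paper both split by residue class mod $k$, but you then create a difficulty for yourself by reindexing with $\mathcal{B}'_m := \mathcal{B}_{r+mk+1}$. The ``delicate point'' you flag is a false alarm: the natural choice is $\mathcal{B}'_m := \mathcal{B}_{r+mk}$, for which $f_{r+mk} \in L_\infty(\mu,\mathcal{B}_{r+(m+1)k}) = L_\infty(\mu,\mathcal{B}'_{m+1})$ and $\mathbb{E}(f_{r+mk}\mid\mathcal{B}'_m) = \mathbb{E}(f_{r+mk}\mid\mathcal{B}_{r+mk})$ \emph{exactly}, so the $k=1$ statement applies with no mismatch and no ``flexible lag'' variant is needed --- this is precisely the paper's reduction. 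Your fallback --- running the second-moment computation directly in the $k$ case, using orthogonality for $|n-m|\ge k$ and absorbing the $O(k)$ near-diagonal terms into a bound $O(kC^2N)$ --- is also valid and sidesteps reindexing entirely, at the modest cost of repeating the Borel--Cantelli and gap-filling steps rather than invoking the $k=1$ result as a black box. Either route closes the proof; the paper's reindexing is the shorter of the two once one picks the right shifted chain.
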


\begin{proof}
The first part (when $k=1$) is the standard version \cite[Chapter 7, Theorem 3]{Feller71}.
Now suppose $k\neq1$. For each $p=0,1,2,\ldots,k-1$, apply the standard
version of the theorem to the increasing sequence of $\sigma$-algebras
$(\mathcal{B}_{kn+p})_{n=1}^{\infty}$, obtaining the a.s. limit
\[
\lim_{N\rightarrow\infty}\frac{1}{N}\sum_{n=1}^{N}(f_{kn+p}-\mathbb{E}(f_{kn+p}\,|\,\mathcal{B}_{kn+p}))=0
\]
Averaging these limits over $p$ gives (\ref{eq:6}).
\end{proof}

\subsection{\label{subsec:Relating-orbits-to-structure-of-mu}Relating orbits
to the local structure of $\mu$ }

Let $T$ be a measurable transformation of a compact metric space
and $\mu$ a Borel probability measure on $X$. We do not assume that
$\mu$ is preserved by $T$. Our goal is to describe the statistical
behavior of the orbit of a $\mu$-typical point $x$. In \cite[Theorem 2.1]{HochmanShmerkin2015-equidistribution-from-fractal-measures},
we showed that if $\mathcal{A}$ is a generating partition for $T$
and $\mathcal{A}^{n}(x)=(\bigvee_{i=0}^{n-1}T^{-i}\mathcal{A})(x)$,
then any measure $\nu$ for which the orbit equidistributes (possibly
along a subsequence) can be described as a limit of averages of the
measures $T^{n}(\mu_{\mathcal{P}^{n}(x)})$.

In the present work we use a different version in which $\mathcal{A}$
is adapted to the dynamics of a different map, or from some hierarchical
structure of $\mu$. We require the atoms of $\mathcal{A}^{n}$ must
have some compatibility with the expansion of $T^{n}$. 
\begin{thm}
\label{thm:equidistribution-1}Let $T:X\rightarrow X$ be a continuous
map of a compact metric space. Let $\mathcal{A}_{1},\mathcal{A}_{2},\mathcal{A}_{3},\ldots$
be a refining sequence of Borel partitions. Let $\mu$ be a Borel
probability measure on $X$ and assume that 
\begin{equation}
\sup_{n\in\mathbb{N}}\{\diam T^{n}A\,:\,A\in\mathcal{A}_{n+k}\,,\,\mu(A)>0\}\rightarrow0\qquad\text{ as }k\rightarrow\infty\label{eq:5}
\end{equation}
Then for $\mu$-a.e. $x$,
\[
\lim_{N\rightarrow\infty}\left(\frac{1}{N}\sum_{n=1}^{N}\delta_{T^{n}x}-\frac{1}{N}\sum_{n=1}^{N}T^{n}\mu_{\mathcal{A}_{n}(x)}\right)=0
\]
in the weak-{*} sense.
\end{thm}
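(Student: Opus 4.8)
The plan is to compare, for a fixed continuous test function $\varphi$, the two averages $\frac1N\sum_{n=1}^N \varphi(T^n x)$ and $\frac1N\sum_{n=1}^N \int \varphi \, d(T^n\mu_{\mathcal{A}_n(x)})$ termwise, and show that the difference of the corresponding summands tends to zero in the Cesàro sense for $\mu$-a.e.\ $x$. Since the space of continuous functions on the compact metric space $X$ is separable in the uniform norm, it suffices to prove this for a single fixed $\varphi$ (and then take a countable dense set and a diagonal argument). So fix $\varphi \in C(X)$ with $\|\varphi\|_\infty \le 1$, say.

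The key observation is that $\int \varphi\, d(T^n\mu_{\mathcal{A}_n(x)}) = \frac{1}{\mu(\mathcal{A}_n(x))}\int_{\mathcal{A}_n(x)} \varphi(T^n y)\, d\mu(y) = \mathbb{E}_\mu(\varphi\circ T^n \mid \mathcal{A}_n)(x)$, where $\mathbb{E}_\mu(\cdot\mid\mathcal{A}_n)$ is the conditional expectation with respect to the $\sigma$-algebra generated by the partition $\mathcal{A}_n$. Thus the difference we must control is $\frac1N\sum_{n=1}^N \big( \varphi(T^n x) - \mathbb{E}_\mu(\varphi\circ T^n \mid \mathcal{A}_n)(x) \big)$. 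This is \emph{almost} of the form treated by Theorem~\ref{thm:martingale-differences}, except that $\varphi\circ T^n$ need not be $\mathcal{A}_{n+k}$-measurable for any fixed $k$: the maps $T^n$ need not be continuous with respect to the partition $\mathcal{A}_n$. The hypothesis (\ref{eq:5}) is precisely what lets us fix this up by an approximation argument: given $k$, define $g_n^{(k)}(x)$ to be $\varphi$ evaluated at (the image under $T^n$ of) some chosen point in the atom $\mathcal{A}_{n+k}(x)$ — so $g_n^{(k)}$ is $\mathcal{A}_{n+k}$-measurable and uniformly bounded by $\|\varphi\|_\infty$. Because $\varphi$ is uniformly continuous and $\diam T^n A$ is small (uniformly in $n$) for atoms $A\in\mathcal{A}_{n+k}$ with $\mu(A)>0$, we get $\|\varphi\circ T^n - g_n^{(k)}\|_{L_\infty(\mu)} \le \omega_\varphi(\varepsilon_k)$ where $\varepsilon_k \to 0$ as $k\to\infty$ and $\omega_\varphi$ is the modulus of continuity of $\varphi$.

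Now apply Theorem~\ref{thm:martingale-differences} to the sequence $(g_n^{(k)})$ with the shift $k$ (this is the "more general" clause of that theorem, using the $\sigma$-algebras generated by $\mathcal{A}_n$): for $\mu$-a.e.\ $x$,
\[
\lim_{N\to\infty}\frac1N\sum_{n=1}^N\big(g_n^{(k)}(x) - \mathbb{E}_\mu(g_n^{(k)}\mid \mathcal{A}_n)(x)\big) = 0 .
\]
Combining the telescoped estimates $|\varphi\circ T^n - g_n^{(k)}| \le \omega_\varphi(\varepsilon_k)$ pointwise $\mu$-a.e.\ and $|\mathbb{E}_\mu(\varphi\circ T^n\mid\mathcal{A}_n) - \mathbb{E}_\mu(g_n^{(k)}\mid\mathcal{A}_n)| \le \omega_\varphi(\varepsilon_k)$ (conditional expectation is a contraction on $L_\infty$), the Cesàro average of $\varphi(T^n x) - \mathbb{E}_\mu(\varphi\circ T^n\mid\mathcal{A}_n)(x)$ has $\limsup_N$ bounded by $2\omega_\varphi(\varepsilon_k)$ for $\mu$-a.e.\ $x$. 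This holds simultaneously for all $k$ on a single full-measure set (intersect countably many), and letting $k\to\infty$ forces the $\limsup$ to be $0$. Finally, intersecting over a countable uniformly dense family of test functions $\varphi$ gives a single full-measure set on which the weak-$*$ convergence holds.

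The main obstacle — and really the only subtle point — is the mismatch between the measurability of $\varphi\circ T^n$ and the $\sigma$-algebra $\mathcal{A}_n$; everything hinges on inserting the approximants $g_n^{(k)}$ and invoking the shifted ($k>1$) version of the martingale-difference ergodic theorem, which is exactly why that generalization was recorded. One should also take a little care that the "chosen point" in each atom can be selected measurably, but since the partitions $\mathcal{A}_{n+k}$ are Borel this is routine (or one can avoid it entirely by working with a countable sub-$\sigma$-algebra that separates atoms well enough for the diameter bound).
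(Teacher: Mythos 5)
Your proof is correct and follows essentially the same route as the paper: reduce to a countable dense set of test functions, rewrite $\int f\,dT^n\mu_{\mathcal{A}_n(x)}$ as $\mathbb{E}_\mu(f\circ T^n\mid\mathcal{A}_n)(x)$, approximate $f\circ T^n$ by an $\mathcal{A}_{n+k}$-measurable function uniformly (using (\ref{eq:5}) and uniform continuity of $f$), and invoke the shifted martingale-difference ergodic theorem. The only cosmetic difference is the choice of approximant: the paper uses $\mathbb{E}(f\circ T^n\mid\mathcal{A}_{n+k})$, which sidesteps the measurable-selection point you flag, whereas you use a pointwise selector $g_n^{(k)}$; both are fine.
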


\begin{proof}
Let $\mathcal{F}\subseteq C(X)$ be a dense countable set. It is enough
to prove that for every $f\in\mathcal{F}$ , for $\mu$-a.e. $x$,
\begin{equation}
\lim_{N\rightarrow\infty}\left(\frac{1}{N}\sum_{n=1}^{N}f(T^{n}x)-\frac{1}{N}\sum_{n=1}^{N}\int fdT^{n}\mu_{\mathcal{A}_{n}(x)}\right)=0\label{eq:7}
\end{equation}
Fix $f\in\mathcal{F}$. Our assumption (\ref{eq:5}) implies the $f\circ T^{n}-\mathbb{E}(f\circ T^{n}\,|\,\mathcal{A}_{n+k})\rightarrow0$
as $k\rightarrow\infty$, uniformly in $n\in\mathbb{N}$ and $x\in\supp\mu$.
Therefore it suffices, for each $k$, to prove (\ref{eq:7}) with
$\mathbb{E}(f\circ T^{n}\,|\,\mathcal{A}_{n+k})$ in place of $f\circ T^{n}$.
We can re-write the other term (\ref{eq:7}) as
\begin{align*}
\int fdT^{n}\mu_{\mathcal{A}_{n}(x)} & =\int f\circ T^{n}\:d\mu_{\mathcal{A}_{n}(x)}\\
 & =\mathbb{E}_{\mu}(f\circ T^{n}\,|\,\mathcal{A}_{n})(x)
\end{align*}
With these modifications, (\ref{eq:7}) follows directly from the
results of the previous section applied to the functions $f_{n}=\mathbb{E}(f\circ T^{n}\,|\,\mathcal{A}_{n+k})$.
\end{proof}

\subsection{\label{subsec:Joinings-with-group-rotations}Equidistribution along
the times $[\beta n]$}

We will need an equidistribution result for pairs of orbits of the
form $(\theta n,T^{[\beta n]}x)$ where $\theta\in\mathbb{R}/\mathbb{Z}$
and $x$ is a typical point for the measure preserving map $T$. The
argument is rather standard but we record the proof for completeness.
We generically write $R_{\theta}$ to denote translation by $\theta$. 
\begin{lem}
\label{lem:pairs}Let $X,Y$ be compact metric spaces, and let $S:Y\rightarrow Y$
be a continuous map with an invariant measure $\mu$. Let $\{x_{k}\}\subseteq X$
be a fixed sequence, let $n_{k}\rightarrow\infty$, and suppose that
the sequence $(x_{k},S^{n_{k}}y)_{k=1}^{\infty}$ equidistributes
to a measure $\nu_{y}$ on $X\times Y$ for $\mu$-a.e. $y$. Let
$\nu=\int\nu_{y}d\mu(y)$ and $\tau=\pi_{1}\nu$, where $\pi_{1}(x,y)=x$.
Then $\nu=\tau\times\mu$.
\end{lem}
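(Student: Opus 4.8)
The plan is to show that the limit measure $\nu$ on $X\times Y$ is invariant under the skew product of the identity on $X$ with $S$ on $Y$, and then to exploit the fact that $\mu$ is already $S$-invariant together with a disintegration argument. Concretely, for a fixed bounded continuous $g(x,y)$ I would compare the ergodic average $\frac{1}{N}\sum_{k=1}^N g(x_k,S^{n_k}y)$ with $\frac{1}{N}\sum_{k=1}^N g(x_k,S^{n_k+1}y)$: these two averages differ by at most $\frac{2\|g\|_\infty}{N}$ (a telescoping/boundary term), so they have the same limit for $\mu$-a.e. $y$, namely $\int g\,d\nu_y$ and $\int g\circ(\id\times S)\,d\nu_y$ respectively. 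Hence $\nu_y$ is $(\id_X\times S)$-invariant for $\mu$-a.e. $y$, and therefore so is $\nu=\int\nu_y\,d\mu(y)$.

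Next I would use the structure of $(\id_X\times S)$-invariant measures on $X\times Y$. Since the transformation acts trivially on the first coordinate, disintegrating $\nu$ over its projection $\tau=\pi_1\nu$ gives $\nu=\int_X \nu^x\,d\tau(x)$ where, by invariance and essential uniqueness of the disintegration, $\nu^x$ is an $S$-invariant probability measure on $Y$ for $\tau$-a.e. $x$. So it remains to identify $\nu^x$ with $\mu$ for $\tau$-a.e. $x$. The natural way to pin this down is via a second moment / independence computation in the first variable: for any $f\in C(X)$ and $h\in C(Y)$, I want $\int f(x)h(y)\,d\nu = \left(\int f\,d\tau\right)\left(\int h\,d\mu\right)$, which is exactly the assertion $\nu=\tau\times\mu$.

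To get that product formula I would again use the near-invariance under shifting the time index, but now applied to functions of the form $f(x_k)h(S^{n_k}y)$ together with the mean ergodic theorem for $S$. The key point: because the averages are insensitive to replacing $S^{n_k}y$ by $S^{n_k+j}y$ for any fixed $j$ (each such replacement costs $O(j/N)$), one can average the $y$-part over a long block $S^{n_k}y, S^{n_k+1}y,\dots,S^{n_k+M-1}y$ without changing the limit; letting $M\to\infty$ and invoking the von Neumann ergodic theorem replaces $h(S^{n_k}y)$ effectively by its $S$-invariant conditional expectation, and since $\mu$ is $S$-invariant the spatial average $\frac{1}{M}\sum_{j<M} h(S^j\cdot)$ converges in $L^2(\mu)$ to $\mathbb{E}_\mu(h\mid \mathcal{I}_S)$. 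Integrating against $f(x_k)$, which does not involve $y$, then decouples the two coordinates in the limit and yields $\int f\,d\tau\cdot\int h\,d\mu$ after one checks the $\mathcal{I}_S$-measurable part contributes only $\int h\,d\mu$ against the $y$-independent weights.

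The main obstacle I anticipate is making the interchange of limits rigorous: we have a limit in $k$ (equidistribution, valid $\mu$-a.e. $y$), a limit in $M$ (the ergodic average in the $y$-direction), and we must let them interact, all while the "good set" of $y$ may a priori depend on the test function. I would handle the test-function dependence by fixing countable dense families in $C(X)$ and $C(Y)$ and taking a single full-measure set of $y$, and I would handle the double limit by a standard $\varepsilon/3$ argument: choose $M$ large so the $L^2(\mu)$ ergodic average is within $\varepsilon$ of its limit, then choose $N$ large so all the finitely many ($j<M$) equidistribution statements hold to within $\varepsilon$, and control the $O(M/N)$ telescoping error. Everything else — the disintegration, the invariance of $\nu_y$, the passage from $\nu_y$ to $\nu$ — is soft and follows from dominated convergence and uniqueness of conditional measures.
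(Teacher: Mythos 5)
There is a genuine gap, and it occurs at the very first step. You claim that $\frac{1}{N}\sum_{k=1}^{N} g(x_k,S^{n_k}y)$ and $\frac{1}{N}\sum_{k=1}^{N} g(x_k,S^{n_k+1}y)$ differ by $O(1/N)$ ``by telescoping,'' and conclude that $\nu_y$ is $(\mathrm{id}_X\times S)$-invariant for a.e.\ $y$. But $n_k$ is an \emph{arbitrary} sequence tending to infinity, not necessarily of the form $n_k=k$, so the set of iterates $\{n_k+1\}$ need not overlap $\{n_k\}$ at all and nothing telescopes. In the application in Proposition~\ref{prop:time-change} one has $n_k=[\beta k]$, which for $\beta>1$ skips integers. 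A concrete counterexample to your invariance claim: take $X$ a point, $Y=\mathbb{R}/\mathbb{Z}$, $S=R_{1/2}$, $\mu=$ Lebesgue, $n_k=2k$. Then $S^{n_k}y=y$ for all $k$, so $\nu_y=\delta_y$, which is certainly not $S$-invariant (this is exactly the phenomenon described in the Remark following Corollary~\ref{cor:joining-with-time-change}). The same telescoping error reappears in your second step, where you want to replace $S^{n_k}y$ by a Birkhoff block $S^{n_k}y,\dots,S^{n_k+M-1}y$; that substitution is also not $O(M/N)$-close to the original average for general $n_k$. So both halves of the argument fail.

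The correct route, which the paper takes, is both shorter and avoids invariance of $\nu_y$ entirely (which, as noted, is genuinely false). One simply integrates the empirical averages $\nu_{y,N}=\frac{1}{N}\sum_{k=1}^{N}\delta_{x_k}\times\delta_{S^{n_k}y}$ over $y$ with respect to $\mu$ \emph{before} taking $N\to\infty$: for each fixed $k$, $\int\delta_{x_k}\times\delta_{S^{n_k}y}\,d\mu(y)=\delta_{x_k}\times(S^{n_k}\mu)=\delta_{x_k}\times\mu$ by $S$-invariance of $\mu$ (this works for any $n_k$), so $\int\nu_{y,N}\,d\mu(y)=\bigl(\frac{1}{N}\sum_{k=1}^N\delta_{x_k}\bigr)\times\mu$ is already a product with second marginal $\mu$, and by dominated convergence this passes to the limit $\nu=\int\nu_y\,d\mu(y)$. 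The invariance you were after is a \emph{consequence} of $\nu=\tau\times\mu$, not an available intermediate step.
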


\begin{proof}
The averages 
\[
\nu_{y,N}=\frac{1}{N}\sum_{k=1}^{N}\delta_{x_{k}}\times\delta_{S^{n_{k}}y}
\]
converges to $\nu_{y}$ for $\mu$-a.e. $y$, therefore $\int\nu_{y,N}d\mu(y)\rightarrow\int\nu_{y}d\mu(y)$.
On the other hand, by $S$-invariance of $\mu$ we have
\begin{align*}
\int\nu_{y,N}d\mu(y) & =(\frac{1}{N}\sum_{k=1}^{N}\delta_{x_{k}})\times\mu
\end{align*}
Thus, the measure$\int\nu_{y,N}d\mu(y)$ is a product measure whose
second marginal is $\mu$, so their limit $\int\nu_{y}d\mu(y)$ has
this form as well.
\end{proof}
Applying this with $X$ the trivial one-point system, we get:
\begin{cor}
If $Y$ is a compact metric space, $S:Y\rightarrow Y$ is continuous,
$\mu$ is an invariant probability measure on $Y$ and $n_{k}\rightarrow\infty$
is such that $(S^{n_{k}}y)_{k=1}^{\infty}$ equidistributes for a
measure $\nu_{y}$ for $\mu$-a.e. $y$, then $\int\nu_{y}d\mu(y)=\mu$.
\end{cor}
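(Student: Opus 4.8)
The plan is to read this off from Lemma~\ref{lem:pairs} by collapsing the first coordinate. Take $X=\{*\}$ to be the one-point space and $S\colon Y\to Y$, $\mu$, and $n_k$ as given; set $x_k=*$ for all $k$, which is trivially a fixed sequence in $X$. Under the canonical homeomorphism $X\times Y\cong Y$, the hypothesis that $(S^{n_k}y)_{k}$ equidistributes for $\nu_y$ is exactly the hypothesis that $(x_k,S^{n_k}y)_{k}$ equidistributes for (the image of) $\nu_y$, so the assumptions of Lemma~\ref{lem:pairs} are met.

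The lemma then yields $\nu:=\int\nu_y\,d\mu(y)=\tau\times\mu$, where $\tau=\pi_1\nu$ is a probability measure on $X$. Since $X$ is a single point, $\tau$ must be the point mass, and hence $\tau\times\mu$ is just $\mu$ under the identification $X\times Y\cong Y$. This gives $\int\nu_y\,d\mu(y)=\mu$, as claimed. There is essentially no obstacle here: the only things to notice are that passing to a one-point first factor is harmless and that a probability measure on a point is unique, so the ``joining'' produced by the lemma degenerates to $\mu$ itself.

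If one prefers a self-contained argument not routing through the lemma, it is the same computation: the averages $\frac1N\sum_{k=1}^N\delta_{S^{n_k}y}$ converge weak-$*$ to $\nu_y$ for $\mu$-a.e.\ $y$, so by bounded convergence $\int\bigl(\frac1N\sum_{k=1}^N\delta_{S^{n_k}y}\bigr)\,d\mu(y)\to\int\nu_y\,d\mu(y)$ when tested against a fixed $f\in C(Y)$; but by $S$-invariance of $\mu$ each term on the left equals $\mu$, so the limit is $\mu$. The only point needing (routine) care is the measurability of $y\mapsto\nu_y$ and the legitimacy of integrating the weak-$*$ limit against a continuous test function, both of which are standard.
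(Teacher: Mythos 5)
Your proposal is correct and is exactly the paper's argument: the corollary is stated immediately after the remark ``Applying this with $X$ the trivial one-point system,'' which is precisely your specialization of Lemma~\ref{lem:pairs} to $X=\{*\}$. The self-contained computation you add as an alternative is also the content of the lemma's proof in this degenerate case, so nothing new is needed.
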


\begin{prop}
\label{prop:time-change}Let $(X_{1},\mu_{1},T_{1})$ and $(X_{2},\mu_{2},T_{2})$
be measure preserving systems on standard measure spaces, and fix
real numbers $\beta_{1},\beta_{2}>0$. Then for $\mu_{1}\times\mu_{2}$-a.e.
$(x_{1},x_{2})$, the orbit $(T_{1}^{[\beta_{1}n]}x_{1},T_{2}^{[\beta_{2}n]}x_{2})$
equidistributes for a measure $\nu_{x_{1},x_{2}}$ on $X_{1}\times X_{2}$
satisfying 
\[
\int\nu_{x_{1},x_{2}}d\mu_{1}\times\mu_{2}(x_{1},x_{2})=\mu_{1}\times\mu_{2}
\]
\end{prop}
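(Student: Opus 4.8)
The plan is to reduce Proposition \ref{prop:time-change} to Lemma \ref{lem:pairs} and its corollary by viewing the pair of time-changed orbits as a single orbit of a product transformation, then integrating out one coordinate at a time. First I would pass to a system where the time changes become genuine iterations. Fix $\beta_1,\beta_2>0$ and consider the suspension-type trick of building, on the space $X_1\times X_2\times \{0,1,\dots\}$ (or more cleanly, arguing directly), the observation that the sequence of times $([\beta_1 n],[\beta_2 n])_{n\ge 1}$ is what controls everything. The cleanest route: first establish that for $\mu_2$-a.e.\ $x_2$ the orbit $(T_2^{[\beta_2 n]}x_2)_{n\ge1}$ equidistributes for \emph{some} measure $\nu^{(2)}_{x_2}$. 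This is a soft fact — the space of measures is weak-* compact and metrizable, so along a subsequence the empirical averages converge, but to get convergence of the full sequence one invokes the pointwise ergodic theorem for the (non-invertible is fine) transformation together with the observation that $[\beta_2 n]$ increments by $\lfloor\beta_2\rfloor$ or $\lceil\beta_2\rceil$ in a pattern governed by an irrational (or rational) rotation; alternatively, and more in the spirit of this paper, build the product system $S = T_1^{?}\times T_2^{?}$ is not literally available since $[\beta_1 n]$, $[\beta_2 n]$ are not linear, so I would instead use the standard device of replacing $(X_i,\mu_i,T_i)$ by its natural extension if needed and invoking Proposition-style results on $[\beta n]$-orbits from the literature, or simply cite that $(T_i^{[\beta_i n]}x_i)$ equidistributes a.e.\ by the Wiener-Wintner / uniquely-ergodic-model argument. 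Granting that, set $\nu_{x_1,x_2}$ to be the a.e.\ weak-* limit of $\frac1N\sum_{n=1}^N \delta_{T_1^{[\beta_1 n]}x_1}\times\delta_{T_2^{[\beta_2 n]}x_2}$.

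Next I would compute the integral $\int \nu_{x_1,x_2}\,d(\mu_1\times\mu_2)$ by Fubini in two stages. Fix $x_2$ in the full-measure set where the $x_2$-orbit equidistributes. Apply Lemma \ref{lem:pairs} with $X = X_2$, $Y = X_1$, $S = T_1$, the fixed sequence $\{x_k\} = \{T_2^{[\beta_2 k]}x_2\}\subseteq X_2$, and $n_k = [\beta_1 k]$: the hypothesis is exactly that $(T_2^{[\beta_2 k]}x_2,\ T_1^{[\beta_1 k]}x_1)$ equidistributes for $\mu_1$-a.e.\ $x_1$ (this is how we chose $\nu_{x_1,x_2}$, up to swapping coordinates). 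The lemma then gives $\int \nu_{x_1,x_2}\,d\mu_1(x_1) = \tau_{x_2}\times\mu_1$ where $\tau_{x_2}$ is the first marginal, which by construction is the limit of $\frac1N\sum \delta_{T_2^{[\beta_2 k]}x_2}$, i.e.\ $\tau_{x_2} = \nu^{(2)}_{x_2}$. So $\int\nu_{x_1,x_2}\,d\mu_1(x_1) = \nu^{(2)}_{x_2}\times\mu_1$. Now integrate over $x_2$: $\int\nu_{x_1,x_2}\,d(\mu_1\times\mu_2) = \big(\int \nu^{(2)}_{x_2}\,d\mu_2(x_2)\big)\times\mu_1 = \mu_2\times\mu_1$, where the last equality is the Corollary applied to $(X_2,\mu_2,T_2)$ along the times $n_k = [\beta_2 k]$. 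Reordering the factors gives $\mu_1\times\mu_2$, as claimed.

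The main obstacle — and the only place real content enters, as opposed to bookkeeping — is establishing a.e.\ convergence (not merely subsequential) of the empirical measures $\frac1N\sum_{n=1}^N\delta_{T_i^{[\beta_i n]}x_i}$ for a single system along the arithmetic sequence $[\beta_i n]$. If $\beta_i\in\mathbb{N}$ this is just Birkhoff for $T_i^{\beta_i}$. For general real $\beta_i$ one has to handle the ``wobble'' of the floor function; the standard fix is to observe that $T_i^{[\beta_i n]} = T_i^{\lfloor\beta_i n\rfloor}$ can be realized as a factor of a skew-product / suspension over the rotation by $\beta_i$ on the circle (or, when $\beta_i$ is rational, periodically), and then Birkhoff on that enlarged system descends to give the claim. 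This is the ``rather standard'' argument the text alludes to before Lemma \ref{lem:pairs}; I would either spell it out via the suspension construction or cite it. Everything downstream — the two applications of the lemma/corollary and the double Fubini — is then purely formal, requiring only that all the relevant full-measure sets be intersected (a countable operation, harmless) and that marginals commute with weak-* limits (they do, by continuity of pushforward under the projections, which are continuous maps of compact metric spaces).
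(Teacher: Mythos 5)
Your high-level strategy matches the paper's (suspension construction plus Lemma \ref{lem:pairs} and its corollary), and your two-stage Fubini computation of the integral identity -- integrate over $x_1$ with Lemma \ref{lem:pairs} to get $\mu_1 \times \nu^{(2)}_{x_2}$, then integrate over $x_2$ with the corollary -- is a correct and perhaps cleaner way to organize that part than the paper's terse ``follows from the previous lemma and its corollary.'' However, there are two genuine gaps upstream.

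The more serious one: you frame the ``main obstacle'' and ``the only place real content enters'' as a.e.\ convergence of the \emph{single-coordinate} empirical measures $\frac{1}{N}\sum \delta_{T_i^{[\beta_i n]}x_i}$, and then write ``granting that, set $\nu_{x_1,x_2}$ to be the a.e.\ weak-$*$ limit of'' the \emph{joint} empirical measures. But the existence of that joint limit does not follow from the two marginal limits existing separately -- the empirical coupling between the two coordinates can oscillate even when each marginal converges, and indeed Lemma \ref{lem:pairs} takes joint equidistribution as a \emph{hypothesis}, not a conclusion. You correctly observe that a naive product system $T_1^{?}\times T_2^{?}$ is ``not literally available,'' but then retreat to a single-coordinate argument instead of pushing the suspension idea to the product. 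The paper resolves this exactly by building the product of the two suspensions, i.e.\ the $\mathbb{R}^2$-action $\{T_{1,s}\times T_{2,t}\}$ on $\widetilde{X}_1\times\widetilde{X}_2$, and applying Birkhoff to the single map $T_{1,\beta_1}\times T_{2,\beta_2}$; this directly produces the joint limit. That step is not bookkeeping.

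The second, smaller gap concerns even the single-coordinate suspension argument you sketch. Birkhoff on the time-$\beta$ map of the suspension gives convergence of $\frac1N\sum f(T^{[\beta n + t]}x)$ for a.e.\ $x$ and \emph{Lebesgue-a.e.\ $t\in[0,1]$}, but what you actually want is the slice $t=0$, which has measure zero in the suspension. The paper closes this with an explicit argument: for $|t-t'|<1/2$ the orbits of $(x,t)$ and $(x,t')$ agree up to distance $|t-t'|$ outside a set of density $|t-t'|$, so one can pass to the limit $t\searrow 0$ along Lebesgue-typical $t$ and conclude that $\widetilde{\nu}_{x,0}$ is well-defined for $\mu$-a.e.\ $x$. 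Your sketch does not mention this step, and without it the suspension argument does not yet yield what Proposition \ref{prop:time-change} asserts.
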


\begin{rem}
It is important to note that in general $\nu_{x_{1},x_{2}}$ may not
be $T_{1}\times T_{2}$-invariant. Indeed if $(X,\mu,T_{i})$ has
a rational multiple of $1/\beta_{i}$ in its pure point spectrum,
it won't be. 
\end{rem}

\begin{proof}
We prove this first when $X_{2}$ is the trivial (one point system),
so we are dealing with a single transformation $(X,\mu,T)$ and parameter
$\beta>0$. Form the suspension of $(X,\mu,T)$ by the constant function
of height $1$, obtaining the flow $\{\widetilde{T}_{t}\}_{t\in\mathbb{R}}$
on $\widetilde{X}=X\times[0,1]/\sim$ where $\sim$ is the relation
$(x,1)\sim(Tx,0)$, so $\widetilde{T}_{t}$ preserves $\widetilde{\mu}=\mu\times Lebesgue$.
Let $\mathcal{F}\subseteq C(X)$ be a dense countable set and for
$f\in\mathcal{F}$ let $\widetilde{f}\in C(\widetilde{X})$ be given
by $\widetilde{f}(x,t)=f(x)$. Apply the ergodic theorem to the time-$\beta$
map $T_{\beta}=\widetilde{T}_{\beta}$ and the maps $\widetilde{f}$
($f\in\mathcal{F}$) to conclude that the averages 
\[
\frac{1}{N}\sum_{n=1}^{N}\widetilde{f}(T_{\beta}^{n}(x,t))=\frac{1}{N}\sum_{n=1}^{N}f(T^{[\beta n+t]}x)
\]
converge for $\mu$\textendash a.e. $x$ and Lebesgue-a.e. $t\in[0,1]$.
This means that for $\mu$-a.e. $x$ and a.e. $t\in[0,1]$, the point
$(x,t)$ equidistributes for a measure $\widetilde{\nu}_{x,t}$. Next,
note that, endowing $\widetilde{X}$ with the metric induced by the
product metric on $X\times[0,1]$, the orbits of $(x,t)$ and $(x,t')$
are within distance $|t-t'|$ of each other outside a set of density
$|t-t'|$ provided that $|t-t'|<1/2$, and we conclude that $\widetilde{\nu}_{x,0}$
is well-defined for $\mu$-a.e. $x$ (we see this by approximating
the orbit of $(x,0)$ by orbits of $(x,t_{n})$ for Lebesgue-typical
$t_{n}\searrow0$). Thus, $\nu_{x}=\widetilde{\nu}_{x,0}$ is well
defined. The fact that $\int\nu_{x}d\mu(x)=\mu$ follows from the
previous lemma and its corollary.

The generalization to two maps (or more generally, $k$ maps) is proved
in the same way, considering the suspension $\mathbb{R}^{2}$-action
$\{T_{1,s}\times T_{2,t}\}$ on $\widetilde{X}_{1}\times\widetilde{X}_{2}$,
and applying the ergodic theorem to the map $T_{1,\beta_{1}}\times T_{2,\beta_{2}}$.
\end{proof}
\begin{cor}
\label{cor:joining-with-time-change}Let $(X,\mu,T)$ be an ergodic
measure preserving system on a compact metric space. Let $\beta>0$
and $\theta\in\mathbb{R}$. Then for $\mu$-a.e. $x$ the sequence
$(n\theta,T^{[\beta n]}x)$ equidistributes for a measure $\nu_{x}$
on $[0,1)\times X$ that satisfies $\int\nu_{x}d\mu(x)=\tau\times\mu$,
where $\tau$ is the invariant measure on $([0,1),R_{\theta})$ supported
on the orbit closure of $0$.
\end{cor}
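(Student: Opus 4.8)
The plan is to reduce the corollary to Proposition~\ref{prop:time-change} and Lemma~\ref{lem:pairs}. Write $K\subseteq\mathbb{R}/\mathbb{Z}$ for the orbit closure of $0$ under $R_\theta$. This $K$ is a compact subgroup of $\mathbb{R}/\mathbb{Z}$, the map $R_\theta$ acts on it uniquely ergodically, $\tau$ is its Haar probability measure, every point $n\theta$ lies in $K$, and $\frac{1}{N}\sum_{n=1}^{N}\delta_{n\theta}\to\tau$ weak-$*$ (the classical equidistribution of an orbit of a compact group rotation). All the empirical measures below will therefore be supported on $K\times X$, a compact subset of $\mathbb{R}/\mathbb{Z}\times X$.

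For the equidistribution assertion I would apply Proposition~\ref{prop:time-change} to the two measure preserving systems $(K,\tau,R_\theta)$ with time parameter $\beta_1=1$ and $(X,\mu,T)$ with parameter $\beta_2=\beta$; since $[\beta_1 n]=n$, this produces a Borel set $G\subseteq K\times X$ with $(\tau\times\mu)(G)=1$ such that for $(w,x)\in G$ the sequence $(w+n\theta,\,T^{[\beta n]}x)$ equidistributes for a probability measure $\nu_{w,x}$ on $K\times X$. By Fubini there is $w_0\in K$ (indeed $\tau$-a.e.\ one) with $\mu(G_{w_0})=1$, where $G_{w_0}=\{x:(w_0,x)\in G\}$. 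Fix such a $w_0$. Since $K$ is a group, $R_{-w_0}\times\id$ is a homeomorphism of $K\times X$, and it carries $\frac{1}{N}\sum_{n=1}^{N}\delta_{(w_0+n\theta,\,T^{[\beta n]}x)}$ to $\frac{1}{N}\sum_{n=1}^{N}\delta_{(n\theta,\,T^{[\beta n]}x)}$; because pushforward under a continuous map is weak-$*$ continuous, it follows that for every $x\in G_{w_0}$, hence for $\mu$-a.e.\ $x$, the sequence $(n\theta,\,T^{[\beta n]}x)$ equidistributes for $\nu_x:=(R_{-w_0}\times\id)_*\nu_{w_0,x}$. (The limit $\nu_x$ is of course the intrinsic limit of the empirical measures along the orbit of $x$ and does not really depend on the auxiliary $w_0$.)

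For the identity $\int\nu_x\,d\mu(x)=\tau\times\mu$ I would \emph{not} integrate the identity supplied by Proposition~\ref{prop:time-change} — that one controls only the double average over $(w,x)$ — but instead apply Lemma~\ref{lem:pairs} with its ``$X$'' taken to be $\mathbb{R}/\mathbb{Z}$, its ``$Y$'' our $X$, ``$S$'' our $T$ with invariant measure $\mu$, fixed sequence $x_k=k\theta$, and times $n_k=[\beta k]\to\infty$. The equidistribution established above is exactly the hypothesis of that lemma (with $\nu_y=\nu_x$), so it gives $\int\nu_x\,d\mu(x)=\tau'\times\mu$, where $\tau'=\pi_1\bigl(\int\nu_x\,d\mu(x)\bigr)=\int\pi_1\nu_x\,d\mu(x)$. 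Since $\pi_1\nu_x=\lim_N\frac{1}{N}\sum_{n=1}^{N}\delta_{n\theta}=\tau$ for each $x\in G_{w_0}$, we conclude $\tau'=\tau$, and hence $\int\nu_x\,d\mu(x)=\tau\times\mu$, as claimed.

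The only delicate point is the passage, in the middle step, from the conclusion of Proposition~\ref{prop:time-change}, which concerns $\tau\times\mu$-a.e.\ pair $(w,x)$, to the specific deterministic initial point $w=0$ demanded by the corollary. This is precisely where homogeneity of the rotation factor is used: $R_\theta$ acts on the \emph{compact group} $K$, so translating a $\tau$-generic $w_0$ down to $0$ is a measure preserving homeomorphism that affects only the first coordinate of the orbit, and the a.e.\ statement transfers. The remaining ingredients — the equidistribution of $\{n\theta\}$ for $\tau$ and the elementary behaviour of pushforwards of weak-$*$ limits — are standard.
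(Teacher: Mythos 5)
Your proof is correct and follows essentially the same route as the paper: invoke Proposition~\ref{prop:time-change} with the rotation as the first factor and $\beta_1=1$, $\beta_2=\beta$; translate the first coordinate so as to replace the $\tau$-generic starting point by $0$; and then apply Lemma~\ref{lem:pairs} with the fixed sequence $x_k=k\theta$ and $n_k=[\beta k]$ to identify $\int\nu_x\,d\mu$. One small point in your favor: by working with $(K,\tau,R_\theta)$, where $K$ is the (compact subgroup) orbit closure of $0$ and $\tau$ its Haar measure, you handle the case of rational $\theta$ cleanly; the paper's proof applies Proposition~\ref{prop:time-change} to $([0,1),R_\theta,\tau)$ but then speaks of ``Lebesgue-a.e.\ $u$'' and ``$m\times\mu$'', which tacitly presumes $\tau$ is Lebesgue, i.e.\ $\theta$ irrational (harmless for the application in Section~\ref{sec:Proof-of-Hosts-theorem-on-01}, but your version is the right way to state the general case). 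The remaining steps — that $R_{-w_0}\times\id$ is a $\tau\times\mu$-preserving homeomorphism of $K\times X$ commuting with the dynamics, that $\pi_1\nu_x=\tau$ by unique ergodicity of $(K,R_\theta)$, and the final invocation of Lemma~\ref{lem:pairs} — match the paper's argument.
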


\begin{proof}
Apply Proposition \ref{prop:time-change} to $X_{1}=([0,1),R_{\theta},\tau)$
and $X_{2}=(X,\mu,T)$, and with $\beta_{1}=1$ and $\beta_{2}=\beta$.
We conclude that for Lebesgue-a.e. $u\in[0,1)$ and $\mu$-a.e. $x$
(chosen independently), 
\[
(n\theta+u,T^{[n\beta]}x)
\]
equidistributes for a measure $\nu_{u,x}$, and these measures integrate
to $m\times\mu$, where $m$ denotes Lebesgue measure on $[0,1)$.
Since translation in the first coordinate is a continuous action commuting
with the other dynamics, we conclude, by translating the first coordinate
by $-u$ that for $\mu$-a.e. $x$ the sequence
\[
(n\theta,T^{[n\beta]}x)
\]
equidistributes for $\nu_{0,x}=\nu_{x}$. Since translation of the
first coordinate does not affect the projection to the second coordinate,
we conclude also that $\nu_{x}$ projects to $\mu$ on the last coordinate.
Of course, the first coordinate equidistributes for $\tau$. Applying
now Lemma \ref{lem:pairs} we find that $\int\nu_{x}d\mu(x)=\tau\times\mu$.
\end{proof}

\section{\label{sec:The-Fourier-tranform-of-scaled-measures}The Fourier transform
of scaled measures}

We establish some elementary estimates on the Fourier coefficients
of well spread-out measures on the line, when they are scaled by a
random amount.

Let $e(x)=\exp(2\pi ix)$, and for $m\in\mathbb{R}$ we write $e_{m}(x)=e(mx)=\exp(2\pi imx)$.
We write $\widehat{f}$ and $\widehat{\nu}$ for the Fourier transform
of a function or measure, respectively; we also sometimes write it
as
\[
\mathcal{F}_{m}(\nu)=\widehat{\nu}(m)
\]
We define translation and scaling maps of the real line: 
\begin{align*}
R_{\theta}x & =x+\theta\\
S_{t}x & =tx
\end{align*}
We note that $|\widehat{R_{\theta}\nu}|=|\widehat{\nu}|$.
\begin{lem}
\label{lem:smoothing-by-scaling}Let $f\in C^{1}([a,b])$ and suppose
that $\int_{a}^{b}f(x)dx=1$. Then for $\xi\neq0$.
\[
|\widehat{f}(\xi)|<\frac{1}{\pi\xi}(\left\Vert f\right\Vert _{\infty}+(b-a)\left\Vert f'\right\Vert _{\infty})
\]
\end{lem}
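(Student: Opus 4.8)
The plan is to bound $\widehat{f}(t)=\int_{a}^{b}f(x)\,e(-tx)\,dx$ (the sign in the exponent is immaterial for the modulus, and since $|\widehat f(-t)|=|\widehat f(t)|$ one may take $t>0$) by a single integration by parts in $x$, which trades a factor of $f$ for a factor of $1/t$. The one small device I would use is to take as antiderivative of $x\mapsto e(-tx)$ the particular primitive vanishing at the left endpoint,
\[
\phi(x)=\frac{e(-tx)-e(-ta)}{-2\pi i t},
\]
so that the boundary contribution at $a$ drops out and integration by parts gives
\[
\widehat{f}(t)=f(b)\,\phi(b)-\int_{a}^{b}f'(x)\,\phi(x)\,dx.
\]

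The two remaining estimates are elementary. First, any two points of the unit circle lie within distance $2$ of one another, so $|\phi(x)|=\frac{|e(-tx)-e(-ta)|}{2\pi t}\le\frac{1}{\pi t}$ for every $x\in[a,b]$. Second, $\int_{a}^{b}|f'(x)|\,dx\le(b-a)\|f'\|_{\infty}$. Substituting these into the identity above yields
\[
|\widehat{f}(t)|\le\frac{1}{\pi t}|f(b)|+\frac{1}{\pi t}(b-a)\|f'\|_{\infty}\le\frac{1}{\pi t}\bigl(\|f\|_{\infty}+(b-a)\|f'\|_{\infty}\bigr),
\]
which is the claim. (Keeping both boundary terms in a plain integration by parts and using $|f(a)|+|f(b)|\le 2\|f\|_{\infty}$ works equally well, since $\frac{1}{2\pi t}\bigl(2\|f\|_{\infty}+(b-a)\|f'\|_{\infty}\bigr)$ already majorizes the stated bound; note that the normalization $\int_{a}^{b}f=1$ is not needed for the inequality itself, only to force $f\not\equiv 0$.)

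There is essentially no obstacle here: the proof is one integration by parts plus the triangle inequality on the unit circle. The only points that deserve a moment's attention are the book-keeping of the constant, so as to land exactly on $\frac{1}{\pi t}$ in front of $\|f\|_{\infty}$ rather than $\frac{1}{2\pi t}$ — handled by either of the two variants above — and the strictness of the inequality, which is automatic unless the whole chain of estimates is simultaneously tight; that can only happen when $f\equiv 1/(b-a)$, and in that degenerate case one checks directly that $|\widehat{f}(t)|=\frac{|\sin\pi t(b-a)|}{\pi t(b-a)}\le\frac{\|f\|_{\infty}}{\pi t}$.
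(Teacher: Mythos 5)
Your argument is correct and follows essentially the paper's route: one integration by parts on $[a,b]$ followed by the trivial bounds $|e_t|\le 1$, $|f|\le\|f\|_\infty$, and $\int_a^b|f'|\le(b-a)\|f'\|_\infty$. The only cosmetic difference is your use of the primitive of $e_{-t}$ vanishing at $a$, which kills one boundary term, whereas the paper keeps both boundary terms and bounds each by $\|f\|_\infty/(2\pi t)$; as you observe, both variants land on the stated estimate.
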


\begin{proof}
Using integration by parts,
\begin{align*}
|\widehat{f}(\xi)| & =|\int_{a}^{b}f(x)e_{\xi}(x)dx|\\
 & =|\frac{1}{2\pi i\xi}f(b)e_{\xi}(b)-\frac{1}{2\pi i\xi}f(a)e_{\xi}(a)-\frac{1}{2\pi i\xi}\int_{a}^{b}f'(x)e_{\xi}(x)dx|\\
 & \leq\frac{2}{2\pi\xi}\left\Vert f\right\Vert _{\infty}+\frac{1}{2\pi\xi}\left\Vert f'\right\Vert _{\infty}(b-a)\\
 & \leq\frac{1}{\pi\xi}(\left\Vert f\right\Vert _{\infty}+(b-a)\left\Vert f'\right\Vert _{\infty})
\end{align*}
\end{proof}
\begin{lem}
\label{lem:FT-smoothing-bound}Let $\nu\in\mathcal{P}(\mathbb{R})$
and $b>1$. Then for every $r>0$ and $m\neq0$,
\[
\int_{0}^{1}|\widehat{S_{b^{t}}\nu}(m)|^{2}dt\leq\frac{1}{r\cdot m\cdot\ln b}+\int\nu(B_{r}(y))d\nu(y)
\]
\end{lem}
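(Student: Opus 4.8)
The plan is to open up the square of the Fourier coefficient as a double integral against $\nu\times\nu$, perform the $t$-integration first, and reduce to a one-dimensional oscillatory integral estimate. (We may assume $m>0$, since otherwise the right-hand side is negative; equivalently one reads $m$ as $|m|$ below.) First, for each fixed $t$ one has
\[
|\widehat{S_{b^{t}}\nu}(m)|^{2}=\int\!\!\int e\bigl(mb^{t}(x-y)\bigr)\,d\nu(x)\,d\nu(y),
\]
and since the integrand is bounded by $1$ and $\nu\times\nu$ is a probability measure, Fubini's theorem gives
\[
\int_{0}^{1}|\widehat{S_{b^{t}}\nu}(m)|^{2}\,dt=\int\!\!\int\Bigl(\int_{0}^{1}e\bigl(mb^{t}(x-y)\bigr)\,dt\Bigr)\,d\nu(x)\,d\nu(y).
\]
The substitution $u=b^{t}$, for which $dt=du/(u\ln b)$ and $u$ runs over $[1,b]$, turns the inner integral into $\frac{1}{\ln b}\int_{1}^{b}e\bigl(mu(x-y)\bigr)\,\frac{du}{u}$.

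The crux is then an elementary bound on $I(c):=\int_{1}^{b}e(cu)\,\frac{du}{u}$ for $c\neq 0$. I would integrate by parts, using $\frac{1}{2\pi i c}e(cu)$ as an antiderivative of $e(cu)$: the boundary terms contribute at most $\frac{1}{2\pi|c|}\bigl(1+\tfrac{1}{b}\bigr)$, and the remaining integral equals $\frac{1}{2\pi|c|}\int_{1}^{b}u^{-2}\,du=\frac{1}{2\pi|c|}\bigl(1-\tfrac{1}{b}\bigr)$, whence $|I(c)|\le\frac{1}{\pi|c|}$. Taking $c=m(x-y)$ shows that for $x\neq y$ the inner integral above has modulus at most $\frac{1}{\pi m\,|x-y|\,\ln b}$; and of course it always has modulus at most $1$.

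It remains to split the double integral over $\nu\times\nu$ according to whether $|x-y|<r$ or $|x-y|\ge r$. On the region $\{|x-y|<r\}$ use the trivial bound $1$, which contributes $(\nu\times\nu)\{|x-y|<r\}=\int\nu(B_{r}(y))\,d\nu(y)$. On $\{|x-y|\ge r\}$ use $\frac{1}{\pi m\,|x-y|\,\ln b}\le\frac{1}{\pi r m\ln b}\le\frac{1}{r m\ln b}$, together with the bound $1$ for the $\nu\times\nu$-measure of that region. Adding the two contributions gives the claimed inequality. The only substantive step is the oscillatory integral estimate of the second paragraph; the rest is routine, the one thing to be careful about being the change of variables $u=b^{t}$, which is exactly what produces the $\ln b$ in the denominator.
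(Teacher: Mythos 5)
Your proof is correct and takes essentially the same approach as the paper's: expand $|\widehat{S_{b^t}\nu}(m)|^2$ as a double integral against $\nu\times\nu$, apply Fubini, bound the inner oscillatory integral, and split the double integral at scale $r$. The only cosmetic difference is that you integrate by parts directly on $\int_1^b e(cu)\,du/u$ after substituting $u=b^t$, whereas the paper identifies the inner integral as the Fourier coefficient of the pushforward density and invokes its Lemma \ref{lem:smoothing-by-scaling}; both amount to the same integration by parts, and your direct version actually gives a marginally sharper constant.
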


\begin{proof}
Let $Y,Y'$ be independent random variables with distribution $\nu$,
so $tY$ has distribution $S_{t}\nu$. Using the definition of the
Fourier transform and Fubini,
\begin{align}
\int_{0}^{1}|\widehat{S_{b^{t}}\nu}(m)|^{2}dt & =\int_{0}^{1}|\mathbb{E}(e(mb^{t}Y))|^{2}dt\nonumber \\
 & =\int_{0}^{1}\mathbb{E}(e(mb^{t}Y)\cdot\overline{e(mb^{t}Y')})dt\nonumber \\
 & =\int_{0}^{1}\mathbb{E}(e(mb^{t}(Y-Y')))dt\nonumber \\
 & =\mathbb{E}(\int_{0}^{1}e(mb^{t}(Y-Y'))dt)\label{eq:scaled-FT}
\end{align}
For each pair of values $y,y'$ of $Y,Y'$, the integral $\int_{0}^{1}e(mb^{t}(Y-Y'))dt$
is just the $m(Y-Y')$-th Fourier coefficient of the random variable
$Z$, where $Z$ is the push-forward of the uniform measure on $[0,1]$
by the map $t\mapsto b^{t}$. The density function $f:[1,b]\rightarrow\mathbb{R}$
of $Z$ satisfies $f(z)=(z\ln b)^{-1}$ so $f'(z)=-(z^{2}\cdot\ln b)^{-1}$,
and we get the bounds 
\begin{align*}
\left\Vert f\right\Vert _{\infty},\left\Vert f'\right\Vert _{\infty} & \leq\frac{1}{\ln b}
\end{align*}
which by the previous lemma (using $m(y-y')$ in place of $\xi$)
gives us
\begin{align*}
|\int_{0}^{1}e_{m}(b^{t}(y-y'))dt| & \leq\frac{2}{\pi m\ln(b)(y-y')}\\
 & <\frac{1}{m\ln(b)(y-y')}
\end{align*}
We can now evaluate (\ref{eq:scaled-FT}) as follows: 
\begin{align*}
\int_{0}^{1}|\widehat{S_{b^{t}}\nu}(m)|^{2}dt & \leq\mathbb{E}\left|\int_{0}^{1}e(mb^{t}(Y-Y'))dt\right|\\
 & =\int\int\left|\int_{0}^{1}e(mb^{t}(y-y'))dt\right|\,d\nu(y')\,d\nu(y)\\
 & =\int\left(\int_{\mathbb{R}\setminus B_{r}(y)}\left|\int_{0}^{1}e(mb^{t}(y-y'))dt\right|\,d\nu(y')\right.\\
 & \qquad\quad+\left.\int_{B_{r}(y)}\left|\int_{0}^{1}e(mb^{t}(y-y'))dt\right|\,d\nu(y')\,\right)\,d\nu(y)\\
 & \leq\int\left(\int_{\mathbb{R}\setminus B_{r}(y)}\left(\frac{1}{m\ln(b)(y-y')}\right)\,d\nu(y')+\int_{B_{r}(y)}1\,d\nu(y')\right)d\nu(y)\\
 & \leq\frac{1}{m\ln(b)r}+\int\nu(B_{r}(y))\,d\nu(y)
\end{align*}
as claimed.
\end{proof}

\section{\label{sec:Proof-of-Hosts-theorem-on-01}Proof of Host's theorem
on $[0,1]$ }

We write $T_{a}x=ax\bmod1$. Let $a,b\geq2$ be multiplicatively independent
integers and $\mu$ a $T_{a}$-invariant and ergodic measure on $[0,1]$
with positive entropy.

\subsection{Natural extension, $a$-adic partition, conditional measures}

Let $\mathcal{A}$ denote the partition of $[0,1)$ into intervals
$[k/a,(k+1)/a)$, $k=0,\ldots,a-1$, and $\mathcal{A}_{n}=\bigvee_{i=0}^{n-1}T_{a}^{-i}\mathcal{A}$
the $a$-adic partition of generation-$n$ whose atoms are of the
form $[k/a^{n},(k+1)/a^{n})$ for $k=0,\ldots,a^{n}-1$. We also write
$\mathcal{A}_{n}(x)$ for the atoms of $\mathcal{A}_{n}$ containing
$x$, sometimes identified with the word of the first $n$ $a$-adic
digits in the representation of $x\in[0,1]$. Let $*$ denote concatenation
of sequences.

Let 
\[
\Omega^{-}=\{(\omega_{n})_{n\leq0}\,:\,\omega_{n}\in\{0,\ldots,a-1\}\}
\]
and take the natural extension of $([0,1],\mu,T_{a})$, which we realize
as the space $\widetilde{\Omega}=\Omega^{-}\times[0,1]$ with the
map $\widetilde{T}_{a}(\omega,x)=(\omega*\mathcal{A}(x),T_{a}x)$.
The measure $\mu$ extends uniquely to an ergodic $\widetilde{T}_{a}$-invariant
measure $\widetilde{\mu}$ on $\widetilde{\Omega}$ such that the
projection $(\omega,x)\mapsto x$ is a factor map of $(\widetilde{\Omega},\widetilde{\mu},\widetilde{T}_{a})\rightarrow([0,1],\mu,T_{a})$.
We denote the elements of $\widetilde{\Omega}$ by $\widetilde{\omega}=(\omega,x)$.
We also sometimes identify $\widetilde{\mu}$ with its projection
to $\Omega^{-}$.

Lift the partition $\mathcal{A}$ of $[0,1]$ via the projection $(\omega,x)\mapsto x$
to a partition $\widetilde{\mathcal{A}}$ in $\widetilde{\Omega}$
so that $\widetilde{\mathcal{A}}_{n}=\bigvee_{i=0}^{n-1}\widetilde{T}^{-i}\widetilde{\mathcal{A}}$
is the lift of $\mathcal{A}_{n}$. 

From now on we do not distinguish between $\mathcal{A}$ and $\widetilde{\mathcal{A}}$
and similarly for other partitions.

Let $\mathcal{C}=\bigvee_{i=-\infty}^{0}\widetilde{T}_{a}^{i}\mathcal{\widetilde{A}}$
denote the $\sigma$-algebra in $\widetilde{\Omega}$ generated by
projection ``to the past'', $(\omega,x)\mapsto\omega\in\Omega^{-}$. 

Let $\{\widetilde{\mu}_{\widetilde{\omega}}^{\mathcal{C}}\}_{\widetilde{\omega}\in\Omega}$
denote the corresponding disintegration. We abbreviate $\widetilde{\mu}_{\omega}$
or $\widetilde{\mu}_{\omega,x}$ for $\widetilde{\mu}_{(\omega,x)}^{\mathcal{C}}$,
which does not introduce any ambiguity since since $\widetilde{\mu}_{(\omega,x)}^{\mathcal{C}}$
depends only on the $\Omega^{-}$-component $\omega$ of $(\omega,x)$.
The atoms $\mathcal{C}(\omega,x)=\{\omega\}\times[0,1]$ of $\mathcal{C}$
are naturally identified with $[0,1]$, giving an identification of
$\widetilde{\mu}_{\omega}$ with a measure on $[0,1]$, which we denote
$\mu_{\omega}$. We thus have 
\[
\mu=\int\mu_{\omega}d\widetilde{\mu}(\omega)
\]

By $\mathcal{C}=\bigvee_{i=-\infty}^{0}\widetilde{T}_{a}^{i}\mathcal{A}$,
we have $\mathcal{C}\lor\mathcal{A}_{n}=\widetilde{T}_{a}^{n}\mathcal{C}$,
which gives us the equivariance relation
\begin{equation}
T_{a}^{n}\left((\mu_{\omega})_{\mathcal{A}_{n}(x)}\right)=\mu_{\widetilde{T}_{a}^{n}(\omega,x)}\label{eq:8}
\end{equation}

\subsection{Time change (matching the rates of $T_{a}$ and $T_{b}$)}

Define
\[
\alpha=\frac{\log b}{\log a}
\]
Independence of $a,b$ implies that $\alpha$ is irrational. Set
\[
n'=[\alpha n]
\]
so that $b^{n}\approx a^{n'}$; more precisely, we can write
\[
b^{n}a^{-n'}=a^{\alpha n-[\alpha n]}=a^{z_{n}}
\]
where
\[
z_{n}=\alpha n\bmod1
\]
is the orbit of $0$ under the irrational rotation $R_{\alpha}:z\mapsto z+\alpha\bmod1$
of the compact group $\mathbb{R}/\mathbb{Z}$. 

\subsection{Applying Weyl's criterion and Theorem \ref{thm:equidistribution-1}}

We wish to show that the sequence $\{T_{b}^{n}x\}_{n=1}^{\infty}$
equidistributes for Lebesgue measure for $\mu$ -a.e. $x$. By Weyl's
equidistribution criterion, we must show for $\mu$-a.e. $x$ and
for every $m\in\mathbb{Z}\setminus\{0\}$ that
\[
\lim_{N\rightarrow\infty}\frac{1}{N}\sum_{n=1}^{N}e_{m}(T_{b}^{n}x)=0
\]
where $e_{m}(t)=\exp(2\pi it)$. It is enough to show that for every
$\varepsilon>0$,
\[
\mu\left(x\;:\;\limsup_{N\rightarrow\infty}\left|\frac{1}{N}\sum_{n=1}^{N}e_{m}(T_{b}^{n}x)\right|<\varepsilon\right)>1-\varepsilon
\]
Since $\mu$ is invariant under $T_{a}^{k}$ for all $k$, the last
property follows if we show that for every $\varepsilon>0$ there
is a $k$ such that 
\begin{equation}
\mu\left(x\;:\;\limsup_{N\rightarrow\infty}\left|\frac{1}{N}\sum_{n=1}^{N}e_{m}(T_{b}^{n}\circ T_{a}^{k}x)\right|<\varepsilon\right)>1-\varepsilon\label{eq:11}
\end{equation}

If $\omega$ is chosen according to $\widetilde{\mu}$ and $x$ conditionally
independently according to $\mu_{\omega}$, then $x$ has distribution
$\mu$. Thus, for $\widetilde{\mu}$-typical $\omega$, we want to
apply Theorem \ref{thm:equidistribution-1} to the limit in the last
event for $\mu_{\omega}$-typical $x$ and $\mathcal{A}_{n'}$; we
can do so because $T_{b}^{n}\mathcal{A}_{n'+k}(x)$ has diameter $O(a^{-k})$,
by choice of $n'$. Recalling that $S_{t}$ denotes scaling by $t$,
there exists $c=c(x,n)$ so that $T_{a}^{n'}|_{\mathcal{A}_{n'}(x)}=S_{a^{n'}}+c$.
Writing $\tau_{c}$ for translation by $c$, by (\ref{eq:8}) we have
$T_{a}^{n'}\left((\mu_{\omega})_{\mathcal{A}_{n'}(x)}\right)=\mu_{\widetilde{T}_{a}^{n'}(\omega,x)}$
and hence $(\mu_{\omega})_{\mathcal{A}_{n}(x)}=S_{a^{-n'}}\tau_{-c}\left(\mu_{\widetilde{T}_{a}^{n}(\omega,x)}\right)$.
This gives
\begin{align}
T_{b}^{n}\left(T_{a^{k}}\left((\mu_{\omega})_{\mathcal{A}_{n'}(x)}\right)\right) & =S_{a^{k}}\circ S_{b^{n}}\circ\left(S_{a^{-n'}}\tau_{-c}(\mu_{\widetilde{T}_{a}^{n'}(\omega,x)})\right)\bmod1\nonumber \\
 & =\tau_{-b^{z_{n}}a^{k}c}\left(S_{a^{k}}\circ S_{a^{z_{n}}}(\mu_{\widetilde{T}_{a}^{n'}(\omega,x)}))\right)\bmod1\label{eq:lifting}
\end{align}
Applying Theorem \ref{thm:equidistribution-1} to the limit in (\ref{eq:11}),
inserting the expression above for $T_{b}^{n}\left(T_{a^{k}}(\mu_{\omega})_{\mathcal{A}_{n'}(x)}\right)$
and using the triangle inequality, we have
\begin{align}
\limsup_{N\rightarrow\infty}\left|\frac{1}{N}\sum_{n=1}^{N}e_{m}(T_{b}^{n}T_{a}^{k}x)\right| & =\limsup_{N\rightarrow\infty}\left|\frac{1}{N}\sum_{n=1}^{N}\int e_{m}d(T_{b}^{n}(T_{a}^{k}(\mu_{\omega})_{\mathcal{A}_{n'}(x)}))\right|\nonumber \\
 & =\limsup_{N\rightarrow\infty}\left|\frac{1}{N}\sum_{n=1}^{N}\mathcal{F}_{m}\left(T_{b}^{n}(T_{a^{k}}(\mu_{\omega})_{\mathcal{A}_{n'}(x)})\right)\right|\nonumber \\
 & \leq\limsup_{N\rightarrow\infty}\frac{1}{N}\sum_{n=1}^{N}\left|\mathcal{F}_{m}\left(T_{b}^{n}(T_{a^{k}}(\mu_{\omega})_{\mathcal{A}_{n'}(x)})\right)\right|\nonumber \\
 & =\limsup_{N\rightarrow\infty}\frac{1}{N}\sum_{n=1}^{N}\left|e_{m}(-z_{n}a^{k}c)\cdot\mathcal{F}_{m}\left(S_{a^{k}}S_{a^{z_{n}}}\mu_{\widetilde{T}_{a}^{n'}(\omega,x)}\right)\right|\nonumber \\
 & =\limsup_{N\rightarrow\infty}\frac{1}{N}\sum_{n=1}^{N}\left|\mathcal{F}_{m}\left(S_{a^{z_{n}}}(S_{a^{k}}\mu_{\widetilde{T}_{a}^{n'}(\omega,x)})\right)\right|\label{eq:9}
\end{align}
Thus, it is enough to show that for all $m\in\mathbb{Z}\setminus\{0\}$,
for every $\varepsilon>0$, there exists a $k$ such that 
\begin{equation}
\widetilde{\mu}\left((\omega,x)\;:\;\lim_{N\rightarrow\infty}\frac{1}{N}\sum_{n=1}^{N}\left|\mathcal{F}_{m}\left(S_{a^{z_{n}}}(S_{a^{k}}\mu_{\widetilde{T}_{a}^{n'}(\omega,x)})\right)\right|<\varepsilon\right)>1-\varepsilon\label{eq:10}
\end{equation}

\subsection{Identifying the limit using Proposition \ref{cor:joining-with-time-change}}

The limit in (\ref{eq:10}) can be identified using Corollary \ref{cor:joining-with-time-change}:
with $\tau=$Lebesgue measure, there is a decomposition $\tau\times\widetilde{\mu}=\int\nu_{\omega,x}d\widetilde{\mu}(\omega,x)$
where $\nu_{\omega,x}$ are measures on $(\mathbb{R}/\mathbb{Z})\times\widetilde{\Omega}$
such that 
\[
\lim_{N\rightarrow\infty}\frac{1}{N}\sum_{n=1}^{N}\left|\mathcal{F}_{m}\left(S_{a^{z_{n}}}(S_{a^{k}}\mu_{\widetilde{T}_{a}^{n'}(\omega,x)}\right)\right|=\int\left|\mathcal{F}_{m}\left(S_{a^{z}}S_{a^{k}}\mu_{\eta}\right)\right|d\nu_{\omega,x}(z,\eta)
\]
(we identify $\mathbb{R}/\mathbb{Z}$ with $[0,1)$, which makes the
term $a^{z}$ meaningful. One should note that $\eta\mapsto\mu_{\eta}$
is not continuous as a function on $(\widetilde{\Omega}^{-},\widetilde{\mu})$,
so the equidistribution established in Corollary \ref{cor:joining-with-time-change}
does not formally imply the last equation. But since we are concerned
with $\widetilde{\mu}$-typical $(\omega,x)$, we can just pass to
a model of the system where $\eta\mapsto\mu_{\eta}$ is continuous
and apply the corollary there. For example, add another coordinate
in the shift space $\widetilde{\Omega}$ representing $\mu_{(\omega,x)}$).
Thus, up to changing $\varepsilon$, (\ref{eq:10}) will follow if
we show that for every $\varepsilon>0$, for some $k$
\[
\int\left(\int\left|\mathcal{F}_{m}\left(S_{a^{z}}S_{a^{k}}\mu_{\eta}\right)\right|d\nu_{\omega,x}(z,\eta)\right)d\widetilde{\mu}(\omega,x)<\varepsilon
\]
Using $\int\nu_{\omega,x}d\widetilde{\mu}(\omega,x)=\tau\times\widetilde{\mu}$
and Cauchy-Schwartz, it is enough to show that for every $\varepsilon>0$,
for some $k$
\[
\int\int_{0}^{1}\left|\mathcal{F}_{m}\left(S_{a^{z}}S_{a^{k}}\mu_{\omega}\right)\right|^{2}dz\,d\widetilde{\mu}(\omega)<\varepsilon
\]

\subsection{Estimating the Fourier transform}

The last inequality follows directly from Lemma \ref{lem:FT-smoothing-bound}
applied to $\nu=S_{a^{k}}\mu_{\eta}$ and $r=a^{k/2}$, provided $k$
satisfies $1/(a^{k/2}\cdot m\cdot\ln a)<\varepsilon/2$ and 
\[
\int\nu(B_{r}(y))d\nu(y)<\varepsilon/2
\]
The first inequality holds for all large enough $k$, and the second
inequality, involving $\nu=S_{a^{k}}\mu_{\eta}$, can be re-written
as 
\[
\int\mu_{\eta}(B_{a^{-k/2}}(y))d\mu_{\eta}(y)<\varepsilon/2
\]
This holds for a fixed non-atomic $\mu_{\eta}$ for all large enough
$k$, and therefore for large $k$ it holds with probability arbitrarily
close to one over the choice of $\eta$. Because $\mu$ has positive
entropy, $\mu_{\eta}$ are a.s. non-atomic; this concludes the proof.

\section{\label{sec:Generalizations-of-Host}Generalizations}

Our argument can be generalized in many ways. For example the same
proof applies if $\mu$ is a strongly separated self-similar measure
on $[0,1]$ defined by contractions $f_{i}(x)=r_{i}x+t_{i}$, with
$\log r_{i}/\log b\notin\mathbb{Q}$ for at least one $i$. One chooses
$\mathcal{A}_{n}$ to be the partition of $\mu$ into cylinder measures
of diameter comparable to $b^{-n}$. If all $r_{i}$ are equal, we
find that $T^{n}\mu_{\mathcal{A}_{n}(x)}$ is, up to a translation,
simply $\mu$ scaled by $b^{z_{n}}$, with $(z_{n})$ an irrational
rotation, and the analysis continue as before. If there are distinct
$r_{i}$, then $z_{n}$ is a random sequence coming from the symbolic
coding, and a small additional argument is needed in place of Proposition
\ref{prop:time-change}. The method extends beyond self-similar measures
to positive entropy ergodic measures on the symbolic coding of the
attractor. It is likely that one can show that the methods recovers
the main results of \cite{HochmanShmerkin2015-equidistribution-from-fractal-measures}
concerning so-called uniformly scaling measures generating a scale-invariant
distribution under suitable spectral assumptions. 

In the forthcoming paper \cite{Hochman2019}, this method is extended
to groups of endomorphisms of $\mathbb{T}^{d}$. The precise statements
can be found there.

\bibliographystyle{plain}
\bibliography{bib}

\end{document}